\newcommand\HL[1]{{\color{black}#1}}
\def\@email#1#2{%
 \endgroup
 \patchcmd{\titleblock@produce}
  {\frontmatter@RRAPformat}
  {\frontmatter@RRAPformat{\produce@RRAP{*#1\href{mailto:#2}{#2}}}\frontmatter@RRAPformat}
  {}{}
}%
\newcounter{secstmt}[section]
\newif\ifinappendix
\renewcommand{\thesecstmt}{%
  \ifinappendix
    \thesection.\arabic{secstmt}
  \else
    \arabic{section}.\arabic{secstmt}
  \fi}
\let\origappendix\appendix
\renewcommand{\appendix}{%
  \inappendixtrue
  \origappendix}
\newenvironment{theorem}[1][]%
  {\par\smallskip
   \refstepcounter{secstmt}%
   \noindent\textcolor{MidnightBlue}{{\textsf{Theorem} \thesecstmt%
     \if\relax\detokenize{#1}\relax\else\ \textsf{(#1)}\fi.}}%
   \ \itshape}
  {\par\smallskip\normalcolor\upshape}
\newenvironment{lemma}[1][]%
  {\par\smallskip
   \refstepcounter{secstmt}%
   \noindent\textcolor{MidnightBlue}{{\textsf{Lemma} \thesecstmt%
     \if\relax\detokenize{#1}\relax\else\ \textsf{(#1)}\fi.}}%
   \ \itshape}
  {\par\smallskip\normalcolor\upshape}
\newenvironment{remark}[1][]%
  {\par\smallskip
   \refstepcounter{secstmt}%
   \noindent\textcolor{MidnightBlue}{\textit{Remark} \thesecstmt%
     \if\relax\detokenize{#1}\relax\else\ \textsf{(#1)}\fi.}%
   \ }
  {\par\smallskip\normalcolor\upshape}
\newenvironment{proof}[1][Proof.]%
  {\par\smallskip
   \noindent\textcolor{MidnightBlue}{\textit{#1}}\ }
  {\par\smallskip\normalcolor}
\begin{document}


\title[]{Band-Limited Equivalence of Convolution Operators and its Application to Filtered Vorticity Dynamics}
\author{Satori Tsuzuki}
\altaffiliation[]{Advanced Multiscale Fluid Science, Research Center for Advanced Science and Technology, The University of Tokyo, 4-6-1 Komaba, Meguro-ku, Tokyo 153-8904, Japan}
\homepage{https://researchmap.jp/Satori_Tsuzuki?lang=en}
\affiliation{%
The University of Tokyo, 4-6-1, Komaba, Meguro-ku, Tokyo 153-8904, Japan
}%

\date{\today}

\begin{abstract}
In this study, we established a general theorem regarding the equivalence of convolution operators restricted to a finite spectral band. We demonstrated that two kernels with identical Fourier transforms over the resolved band act identically on all band-limited functions, even if their kernels differ outside the band. This property is significant in applied mathematics and computational physics, particularly in scenarios where measurements or simulations are spectrally truncated. As an application, we examine the proportionality relation $S(\bm r) \approx \zeta\,\omega(\bm r)$ in filtered vorticity dynamics and clarify why real-space diagnostics can underestimate the spectral proportionality due to unobservable degrees of freedom. Our theoretical findings were supported by numerical illustrations using synthetic data. 
\end{abstract}

\maketitle

\section{Introduction}
In this study, we address the following question: \emph{When two convolution operators agree on all band-limited functions, must their kernels agree everywhere?} We show that in a band-limited setting, this is not necessarily the case and provide a precise characterization.

In many areas of applied mathematics, ranging from signal processing to computational physics, two convolution operators acting on a spectrally truncated function space are often compared. A recurring question is, if two such operators produce identical outputs for all band-limited inputs, must their kernels be equal everywhere? A common but incorrect assumption is that the equality of integrals for all test functions enforces pointwise equality of the kernels. This study disproves this assumption in a band-limited setting and provides a rigorous characterization of when two convolution operators are equivalent over a restricted spectral band.

Beyond its mathematical significance, this result has practical implications in fields where observations or simulations are spectrally truncated. Examples include imaging systems, seismic inversion, numerical simulations with finite resolution, and large eddy simulation (LES)~\cite{sagaut2006} in fluid dynamics. In the case of LES, unresolved scales are effectively projected out using a numerical filter, so any operator relations are meaningful only on the resolved spectral band.
As a concrete application, we consider filtered vorticity dynamics, which provides a representative setting where band-limited equivalence naturally arises. This choice is motivated by the fact that LES inherently relies on spectrally truncated descriptions~\cite{sagaut2006, smagorinsky1963}, and multi-filter relations~\cite{germano1991} illustrate precisely the type of band-limited convolution equivalence developed in this work. 
In particular, closure assumptions proportional to vorticity have been motivated by linear-response theory~\cite{bolle2024}, statistical analyses of vorticity distributions~\cite{wilczek2009}, and dynamical studies of vortex stretching and enstrophy production~\cite{buaria2020}, further reinforcing the relevance of our framework.

\section{Mathematical Framework}
We work in $\mathbb{R}^d$ with $L^2$ functions, and define the Fourier transform as
\begin{equation}
  \hat{f}(\bm k) = \int_{\mathbb{R}^d} f(\bm r)\, e^{-i \bm k\cdot\bm r}\, \mathrm{d}\bm r .
\end{equation}
Let $\mathcal{B}_\Delta \subset \mathbb{R}^d$ denote the \emph{resolved spectral band} and
$\mathcal{V}_\Delta=\{\omega\in L^2:\operatorname{supp}\hat\omega\subset\mathcal{B}_\Delta\}$.
For kernel $K \in L^1(\mathbb{R}^d)$, we define the convolution operator as
\begin{equation}
  (T_K \omega)(\bm r) = \int_{\mathbb{R}^d} K(\bm r - \bm r')\, \omega(\bm r')\, \mathrm{d}\bm r' .
\end{equation}
Its Fourier symbol is $\hat{K}(\bm k)$ such that $\widehat{T_K \omega}(\bm k) = \hat{K}(\bm k)\,\hat{\omega}(\bm k)$. We used a standard Fourier analysis on $L^1$ and $L^2$ (Plancherel, convolution theorem, and the $L^1\!\to\!L^\infty$ bound to $\hat K$); see~\cite{grafakos2014,katznelson2004,young1912}.

\begin{theorem}[Band-limited equivalence]\label{thm:ble}
Let $K_1,K_2\in L^1(\mathbb{R}^d)$ and $\mathcal{B}_\Delta$ have finite measures.
Then
\begin{align}
  T_{K_1}\omega = T_{K_2}\omega \ \ \forall\,\omega\in\mathcal{V}_\Delta \nonumber \\
  \quad\Longleftrightarrow\quad
  \hat K_1(\bm k)=\hat K_2(\bm k) \ \text{ for a.e.\ }\bm k\in\mathcal{B}_\Delta . \nonumber
\end{align}
\end{theorem}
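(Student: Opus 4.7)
The two directions separate cleanly, and both reduce, via the convolution theorem $\widehat{T_K\omega}=\hat K\hat\omega$, to comparisons of the Fourier symbols on the band $\mathcal{B}_\Delta$. Throughout, I would lean on the standard tools cited in the excerpt: the convolution theorem, Plancherel/Fourier inversion on $L^2$, the $L^1\!\to\!L^\infty$ bound ensuring $\hat K_j$ is bounded and continuous, and Young's inequality $\|K_j*\omega\|_{L^2}\le\|K_j\|_{L^1}\|\omega\|_{L^2}$ to guarantee $T_{K_j}\omega\in L^2$.

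For the sufficient direction ($\Leftarrow$), I would assume $\hat K_1=\hat K_2$ a.e.\ on $\mathcal{B}_\Delta$. For $\omega\in\mathcal{V}_\Delta$, the symbol products agree a.e.: equality holds on $\mathcal{B}_\Delta$ by hypothesis, and off $\mathcal{B}_\Delta$ both $\hat K_1\hat\omega$ and $\hat K_2\hat\omega$ vanish because $\hat\omega=0$ there. Fourier inversion then yields $T_{K_1}\omega=T_{K_2}\omega$ in $L^2$.

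For the necessary direction ($\Rightarrow$), I would set $D(\bm k):=\hat K_1(\bm k)-\hat K_2(\bm k)$, which is bounded and continuous. Taking Fourier transforms in the hypothesis gives $D(\bm k)\hat\omega(\bm k)=0$ a.e.\ for every $\omega\in\mathcal{V}_\Delta$. The task is then to exhibit a single test function whose transform is nonzero wherever $D\ne 0$ on $\mathcal{B}_\Delta$. This is where the finite-measure assumption on $\mathcal{B}_\Delta$ enters: because $D$ is bounded and $|\mathcal{B}_\Delta|<\infty$, the function $g(\bm k):=\overline{D(\bm k)}\,\mathbf{1}_{\mathcal{B}_\Delta}(\bm k)$ lies in $L^2\cap L^1$, so its inverse Fourier transform defines a legitimate $\omega_0\in\mathcal{V}_\Delta$. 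Substituting $\omega_0$ into $D\hat\omega=0$ gives $|D|^2\mathbf{1}_{\mathcal{B}_\Delta}=0$ a.e., hence $D=0$ a.e.\ on $\mathcal{B}_\Delta$.

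The main obstacle I anticipate is precisely this construction: verifying that $\mathcal{V}_\Delta$ is rich enough to probe $D$ pointwise across $\mathcal{B}_\Delta$. The finite-measure hypothesis is what lets one write down a single probing function rather than resorting to density or limiting arguments; an equivalent but slightly longer route would be to test against $\omega$ whose transform is $\mathbf{1}_E$ for arbitrary measurable $E\subset\mathcal{B}_\Delta$ and deduce $\int_E D=0$ for all such $E$. Either way, the rest of the argument is purely formal manipulation of the convolution theorem and Plancherel's identity.
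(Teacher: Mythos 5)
Your proposal is correct and follows essentially the same route as the paper: the ($\Leftarrow$) direction via the convolution theorem and Plancherel is identical, and your probing function $\omega_0$ with $\hat\omega_0=\overline{D}\,\mathbf{1}_{\mathcal{B}_\Delta}$ is exactly the paper's choice $\hat\omega=\phi^{\ast}$ with $\phi=(\hat K_1-\hat K_2)\mathbf{1}_{\mathcal{B}_\Delta}$, justified in both cases by the boundedness of the symbols and the finite measure of $\mathcal{B}_\Delta$. The only cosmetic difference is your extra (unneeded) observation that $g\in L^1$ and the alternative indicator-function route, which the paper does not pursue.
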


\begin{proof}
$(\Rightarrow)$ Suppose $T_{K_1}\omega = T_{K_2}\omega$ for all $\omega\in\mathcal{V}_\Delta$.
For $K\in L^1$ and $\omega\in L^2$, Young's inequality
\[
  \|K * \omega\|_{L^2} \ \le\ \|K\|_{L^1} \,\|\omega\|_{L^2}
\]
ensures that $K * \omega \in L^2$. The convolution theorem
\[
  \widehat{K*\omega} = \hat{K}\,\hat{\omega}
\]
holds in the $L^2$-sense (i.e., equality as elements of $L^2(\mathbb{R}^d)$).  
Taking the Fourier transforms and applying this identity, we obtain
\[
  (\hat{K}_1(\bm k)-\hat{K}_2(\bm k))\,\hat{\omega}(\bm k) = 0
  \quad\text{for a.e.\ }\bm k,\ \forall\,\omega\in\mathcal{V}_\Delta .
\]
Let $\phi(\bm k):=(\hat{K}_1(\bm k)-\hat{K}_2(\bm k))\,\mathbf{1}_{\mathcal{B}_\Delta}(\bm k)$.
Because $\mathcal{B}_\Delta$ has a finite measure and $\hat{K}_i\in L^\infty$ for $i=1,2$ (by the $L^1\!\to\!L^\infty$ bound), we have $\phi\in L^2$.
The complex conjugation is denoted by $(\cdot)^{\ast}$.
Based on the unitarity of the Fourier transform on $L^2$, we choose $\hat{\omega} = \phi^{\ast}$, which satisfies $\operatorname{supp}\hat{\omega} \subset \mathcal{B}_\Delta$; thus, $\omega \in \mathcal{V}_\Delta$.
Then
\[
  0=\|(\hat{K}_1-\hat{K}_2)\hat{\omega}\|_{L^2}^2
   = \int_{\mathbb{R}^d} |\phi(\bm k)|^2\, \mathrm{d}\bm k,
\]
so $\phi=0$ a.e., i.e., $\hat{K}_1(\bm k)=\hat{K}_2(\bm k)$ for a.e.\ $\bm k\in\mathcal{B}_\Delta$.

$(\Leftarrow)$ Conversely, if $\hat{K}_1 = \hat{K}_2$ on $\mathcal{B}_\Delta$ and $\omega\in\mathcal{V}_\Delta$,
then $\hat\omega$ vanishes outside $\mathcal{B}_\Delta$ and
$\hat{K}_1(\bm k)\hat{\omega}(\bm k)=\hat{K}_2(\bm k)\hat{\omega}(\bm k)$ for a.e.\ $\bm k$.
Hence, $\widehat{T_{K_1}\omega-T_{K_2}\omega}=0$ in $L^2$ and by Plancherel
$T_{K_1}\omega=T_{K_2}\omega$ in $L^2$ (in particular, \ a.e.).
\end{proof}

\begin{remark}[Band-limited counterexample and interpretation]
When $d=1$, select $\varphi,\psi\in C_c^\infty(\mathbb{R})$ such that
$\varphi\equiv 1$ on $[-k_c,k_c]$, $\operatorname{supp}\varphi\subset[-2k_c,2k_c]$,
and $\operatorname{supp}\psi\subset(-\infty,-3k_c]\cup[3k_c,\infty)$ with $\psi\not\equiv0$.
We set $\widehat{K}_1=\varphi$ and $\widehat{K}_2=\varphi+\psi$.
Then $K_1,K_2=\check{\widehat{K}_1},\check{\widehat{K}_2}\in\mathcal{S}(\mathbb{R})\subset L^1(\mathbb{R})$,
$\widehat{K}_1=\widehat{K}_2$ on $[-k_c,k_c]$ and for every $\omega\in\mathcal{V}_\Delta$,
\[
\widehat{(T_{K_1}-T_{K_2})\omega}=(\widehat{K}_1-\widehat{K}_2)\,\widehat{\omega}=0,
\]
so $T_{K_1}\omega=T_{K_2}\omega$ and $K_1\neq K_2$ in the physical space.
This demonstrates that differences outside the resolved band are unobservable on $\mathcal{V}_\Delta$.
\end{remark}

\section{Application to Filtered Vorticity Dynamics}
In the LES context, unresolved high-wavenumber degrees-of-freedom create distinct real-space kernels that are indistinguishable when restricted to a resolved spectral band (cf.\ Theorem~\ref{thm:ble}).

Large-eddy simulation of incompressible flow advances the filtered vorticity $\overline{\omega}$
related to the unfiltered field as follows:
\begin{equation}
  \overline{\omega}(\bm r)
  = \int_{\mathbb{R}^d} F(\bm r-\bm r')\,\omega(\bm r')\,\mathrm{d}\bm r',
\end{equation}
where $F$ denotes the LES filter kernel. We idealize $F$ as a linear, translation-invariant
spatial filter, i.e., $\overline{\phi} = F * \phi$ with $F \in L^1(\mathbb{R}^d)$ and
$\int_{\mathbb{R}^d}F = 1$ such that $\overline{\partial_j \phi} = \partial_j \overline{\phi}$
(derivatives commute through filtering; see~\cite{sagaut2006}).
This idealization holds exactly in periodic domains or on $\mathbb{R}^d$ and may only be
approximate near physical boundaries.

Starting from the incompressible vorticity transport equation
\[
  \partial_t \omega(\bm r,t)
  = \eta\nabla^2\omega(\bm r,t) - (\bm u\!\cdot\!\nabla)\omega(\bm r,t)
\]
$\bigl[\text{in 3D add the stretching term }-(\bm\omega\!\cdot\!\nabla)\bm u\bigr]$, and applying filter $\overline{(\,\cdot\,)} = F*\!(\,\cdot\,)$ with a constant viscosity, we obtain
\begin{align}
  \partial_t\,\overline{\omega}(\bm r,t)
  &= \eta\nabla^2\overline{\omega}(\bm r,t) + S(\bm r,t), \nonumber \\
  S(\bm r,t) &:= -\,\overline{(\bm u\!\cdot\!\nabla)\,\omega}(\bm r,t)
  \label{eq:filtered-vort-source}
\end{align}
$\bigl[ \text {add} -\,\overline{(\bm\omega\!\cdot\!\nabla)\bm u}\ \text{in 3D} \bigr]$, where $S$ represents the subgrid-scale source term arising from the non-commutativity of
nonlinear advection and filtering. 
In the following, we \HL{consider} the 2D case in which the vortex-stretching term vanishes because
vorticity is orthogonal to the flow plane and its magnitude is preserved in incompressible flows.
Motivated by the scale-similarity and symmetry arguments (see Appendix~\ref{app:closure-derivation}),
a few closure models approximate the subgrid term as
\[
  S(\bm r) \approx \zeta\,\omega(\bm r).
\]
This proportional form has also been supported by linear-response theory of vortex dynamics~\cite{bolle2024},
statistical analyses of vorticity distributions in turbulence~\cite{wilczek2009},
and recent high-Reynolds-number studies of vortex stretching and enstrophy production~\cite{buaria2020}. A complementary demonstration was recently provided by freely decaying two-dimensional turbulence initialized with a concentrated point-vortex distribution, in which the scale-to-scale transfer mediated by $\nabla\times\omega$ was directly observed in numerical simulations~\cite{tsuzuki2025}.

Combining this ansatz with Green's function $G$ of the linear filtered operator yields
\begin{equation}
  \overline{\omega}(\bm r)
  = \int_{\mathbb{R}^d} \zeta\,G(\bm r-\bm r')\,\omega(\bm r')\,\mathrm{d}\bm r'.
  \label{eq:selfconv}
\end{equation}
Here $\omega$ denotes a vorticity field restricted to the resolved band $\mathcal{B}_\Delta$, so that \eqref{eq:selfconv} is consistent with the band-limited framework of Theorem~\ref{thm:ble}. This restriction ensures that any comparison between $F$ and $\zeta G$ is meaningful only on the resolved subspace. Recasting the difference between two convolution integrals as the difference between their kernels requires that the test function $\omega$ span the full function space. In LES, however, while the physical vorticity field prior to filtering contains arbitrarily fine scales, the effective $\omega$ entering relation \eqref{eq:selfconv} is truncated to the after-filter resolved band $\mathcal{B}_\Delta$. Thus, unresolved degrees of freedom inevitably remain outside $\mathcal{B}_\Delta$. This restriction arises naturally in multi-filter formulations~\cite{germano1991}, where comparisons are only meaningful within the overlap of resolved spectra, and it also reflects the finite resolution of numerical simulations, which truncate all fields to a band-limited representation. Consequently, $\omega$ cannot probe degrees of freedom outside $\mathcal{B}_\Delta$, and the difference of kernels cannot be directly inferred. In particular,
\[
   (F-\zeta G)*\omega \equiv 0 \quad \forall \,\omega\in\mathcal{B}_\Delta
   \;\;\not\!\!\implies\;\; F \equiv \zeta G .
\]
This means that the situation should not be misinterpreted as admitting only the trivial solution $\omega\equiv 0$; rather, $F$ and $\zeta G$ are \emph{unidentifiable} outside the resolved band, precisely because unresolved components of the physical field are invisible to the test space. Theorem~\ref{thm:ble} shows that only the Fourier transforms need coincide on $\mathcal{B}_\Delta$; differences in the kernels outside this band are unobservable in LES. As later illustrated by numerical tests, this also helps explain why real-space correlations between $S$ and $\omega$ can appear weak even when strong proportionality exists within the resolved band, since unresolved components contaminate pointwise comparisons in the physical space.

In filtered dynamics, the closure relation \eqref{eq:selfconv} can be recast in spectral space
into the recursive form
\begin{equation}
    i\bm{k} \times \widehat{\overline{\omega}}(\bm{k})
    = \widehat{\mathcal{H}}(\bm{k})\,\hat{\omega}(\bm{k})
      + \widehat{\mathcal{F}}(\bm{k})\,
        \big[\,i\bm{k}\times\hat{\omega}(\bm{k})\,\big], 
    \label{eq:reccursiverelation}
\end{equation}
where $\widehat{\mathcal{H}}$ represents the direct filtered contribution and
$\widehat{\mathcal{F}}$ encodes subgrid feedback onto the curl of the unfiltered vorticity.
The second term feeds back a transformed version of $i\bm{k}\times\hat{\omega}$ into itself,
creating a \emph{recursive} dependence of the resolved-scale dynamics on itself through
unresolved scales.
This feedback mechanism exists even for linear operators and produces an intrinsic
small-to-large scale coupling.
In real space, such a mechanism can be obscured by the convolution form of filtering. 
However, in spectral space, the recurrence relation \eqref{eq:reccursiverelation} makes the coupling
explicit and analytically transparent
(see Appendix~\ref{appendix:derivrecurcur} for the derivation).

\begin{remark}[On $\nabla\times\omega$ and multi-scale structure]
Let $\omega(\bm r)$ be the vorticity field and $\hat{\omega}(\bm k)$ its Fourier transform.
The curl operator $\nabla\times$ acts in the spectral space as
\[
    \widehat{\nabla\times\omega}(\bm k) = i\,\bm k \times \hat{\omega}(\bm k),
\]
whose magnitude scales to $|\bm k|\,\|\hat{\omega}(\bm k)\|$.
Even if a proportionality $\hat{S}(\bm k) \approx \zeta\,\hat{\omega}(\bm k)$ holds on the resolved band $\mathcal{B}_\Delta$,
the corresponding curl relation is
\[
    \widehat{\nabla\times S}(\bm k) \approx \zeta\, i\,\bm k \times \hat{\omega}(\bm k),
\]
which introduces an explicit $|\bm k|$--dependent gain.

Consequently, the transfer function measured on $\nabla\times(\cdot)$ remains flat in magnitude only up to the deterministic $|\bm k|$ factor.
Our spectral diagnostics explicitly account for this scaling, ensuring that proportionality can still be robustly identified using curl-based measures.
\end{remark}

\begin{remark}[Extension to 3D and anisotropic filters]
While our explicit vorticity-based application has been presented in a 2D setting
for clarity, the band-limited equivalence theorem itself is dimension independent
and applies equally to convolution kernels $K:\mathbb{R}^3\to\mathbb{R}$ and to
vector-valued operators. In three dimensions, the filtered vorticity equation
contains the additional vortex-stretching term, but once the combined
contribution of all unresolved interactions is modeled as a translation-invariant
linear operator $T$ acting on the resolved vorticity, its action on a
band-limited field is still a Fourier multiplier:
\[
\widehat{T\boldsymbol{\omega}}(\boldsymbol{k})
   = \widehat{H}(\boldsymbol{k})\,\widehat{\boldsymbol{\omega}}(\boldsymbol{k}),
   \qquad \boldsymbol{k}\in \mathcal{B}_\Delta.
\]
Thus, on the resolved band $\mathcal{B}_\Delta$, the closure has exactly the structure
treated in Appendix~\ref{app:closure-derivation}: if $\widehat{H}(\boldsymbol{k})$ is approximately
isotropic on $\mathcal{B}_\Delta$ and varies slowly over the energetic range
$|\boldsymbol{k}|\in[k_1,k_2]\subset(0,k_c]$, then
\[
\widehat{T\boldsymbol{\omega}}(\boldsymbol{k})
   \approx \zeta\,\widehat{\boldsymbol{\omega}}(\boldsymbol{k}),
   \qquad \boldsymbol{k}\in \mathcal{B}_\Delta,
\]
which yields the band-wise proportionality
$T\boldsymbol{\omega}\approx \zeta\,\boldsymbol{\omega}$ for the resolved
vorticity. This conclusion is insensitive to whether the underlying 3D dynamics
includes vortex stretching, since all unresolved contributions are absorbed into
the effective multiplier $\widehat{H}(\boldsymbol{k})$.

Many practical LES implementations also employ anisotropic grid filters. As long
as these filters remain (discrete) convolutions on uniform grids, they admit an
anisotropic Fourier-multiplier representation on rectangular spectral supports
\cite{lund1997,meneveau2000}. The band-limited equivalence theorem then applies
to each scalar entry of $\widehat{H}(\boldsymbol{k})$ on the resolved set
$\mathcal{B}_\Delta$.

Modern subgrid-scale closures, including dynamic models of the
Germano--Piomelli--Moin--Cabot type~\cite{germano1991}, make explicit use of
relations involving both the grid and test filters. These identities are strictly
meaningful only on the intersection of the corresponding resolved spectra. Our
framework provides a rigorous basis for interpreting such relations as band-wise
equivalences: different kernels may disagree substantially outside the common
band while remaining indistinguishable on the doubly filtered subspace. This
perspective further supports the use of spectral gain and coherence as diagnostics
for validating 3D closures in the presence of vortex stretching and anisotropy.
\end{remark}

\section{Numerical Illustrations}
\subsection{Band-limited counterexample}\label{subsec:counterexample}
We illustrate this theorem by using a synthetic example of a periodic square
$\Omega=[0,2\pi]^2$ discretized by an $n\times n$ Fourier collocation grid ($n=256$ unless
stated).  Let $k_c>0$ and denote the resolved band by $\mathcal{B}_\Delta=\{k:\,|k|\le k_c\}$.
A real-valued, band-limited field is generated by prescribing random phases on the band and
zero elsewhere,
\[
  \hat\omega(k)=\exp\{i\phi(k)\}\,\mathbf{1}_{\{|k|\le k_c\}},\qquad \phi(k)\sim\mathrm{Unif}(0,2\pi),
\]
followed by an inverse FFT to obtain $\omega$ in physical space.

We compared the following two convolution symbols:
\[
  \hat F(k)=\mathbf{1}_{\{|k|\le k_c\}},\qquad
  \widehat{\zeta G}(k)=\hat F(k)+Q(k),
\]
where $Q$ is supported on the annulus $2k_c<|k|<3k_c$. Since $\hat\omega$ vanishes
precisely where $Q\neq 0$, both operators act identically on $\omega$:
$(\zeta G*\omega)=(F*\omega)$ on the grid up to roundoff, in agreement with the
band-limited equivalence theorem. Numerically, we observe machine-precision agreement
in both $L^2$ and $L^\infty$ norms (double precision); see Fig.~\ref{fig:bandlimited-counterexample}c.

For visualization, panels~\ref{fig:bandlimited-counterexample}a--b exhibit
$\hat F$ and $\widehat{\zeta G}$ on the $(k_x,k_y)$-plane (only the half-plane $k_y\ge 0$
is shown; the other half follows Hermitian symmetry
$\hat\omega(-k_x,-k_y)=\overline{\hat\omega(k_x,k_y)}$ for a real-valued $\omega$).
The dashed circle marks the band boundary $|k|=k_c$ defining $\mathcal{B}_\Delta$,
whereas the dotted circles at $|k|=2k_c,\,3k_c$ delimit the annulus, indicating the support of $Q$.
Panel~(c) shows a log-scale histogram of the pointwise difference
$\lvert(\zeta G*\omega)-(F*\omega)\rvert$, which clusters at machine precision.

\begin{figure*}[t]
  \centering
  \begin{subfigure}[b]{0.33\textwidth}
    \centering
    \includegraphics[width=\linewidth]{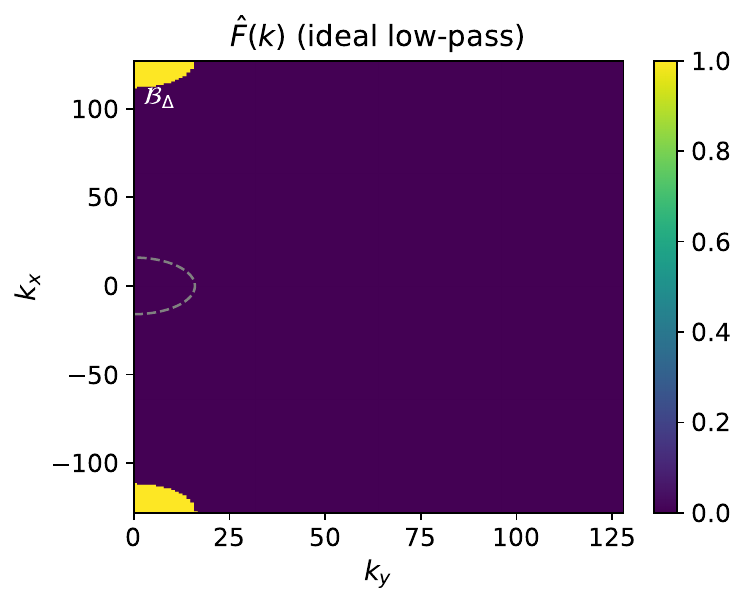}
    \subcaption{}\label{fig:bl-a}
  \end{subfigure}\hfill
  \begin{subfigure}[b]{0.33\textwidth}
    \centering
    \includegraphics[width=\linewidth]{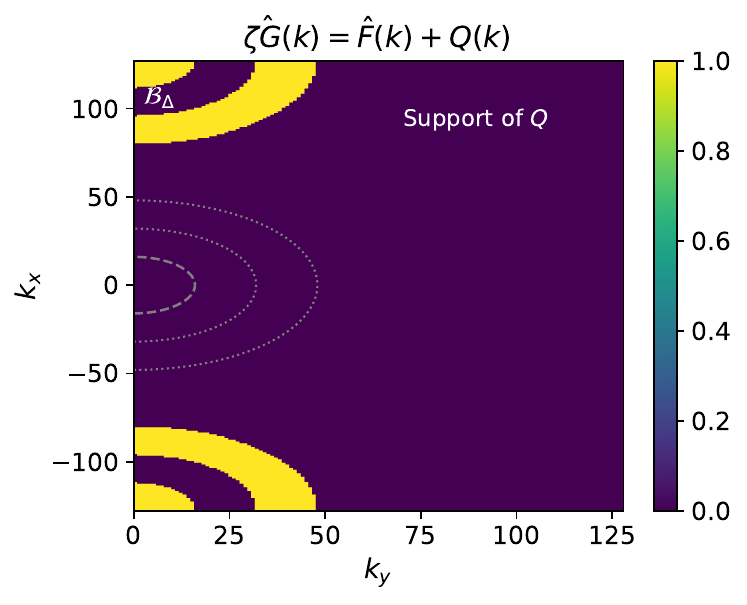}
    \subcaption{}\label{fig:bl-b}
  \end{subfigure}\hfill
  \begin{subfigure}[b]{0.33\textwidth}
    \centering
    \includegraphics[width=\linewidth]{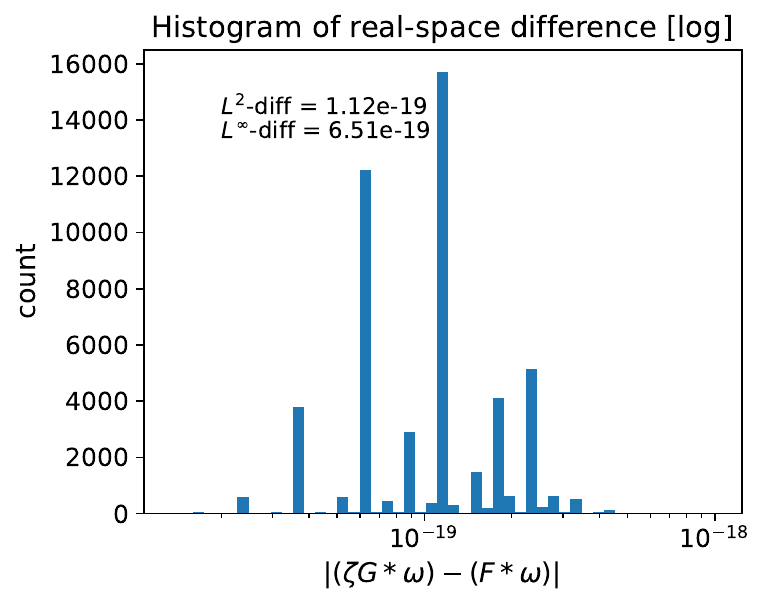}
    \subcaption{}\label{fig:bl-c}
  \end{subfigure}
  \caption{Band-limited counterexample. (a) Ideal low-pass transfer $\hat{F}(k)$.
  (b) Modified transfer $\widehat{\zeta G}(k)=\hat{F}(k)+Q(k)$ that differs from $\hat{F}$ only outside the resolved band.
  (c) Histogram of the real-space difference $\lvert(\zeta G*\omega)-(F*\omega)\rvert$ (log scale).
  \textbf{Dashed circle}: band boundary $\lvert k\rvert = k_c$ defining $\mathcal{B}_\Delta$.
  \textbf{Dotted circles}: radii $\lvert k\rvert = 2k_c,\,3k_c$, delimiting the annular domain indicating the support of $Q$
  (i.e.\ $2k_c < \lvert k\rvert < 3k_c$).
  Only the half-plane $k_y \ge 0$ is shown; the other half follows from the Hermitian symmetry
  $\hat{\omega}(-k_x,-k_y)=\overline{\hat{\omega}(k_x,k_y)}$ for real-valued $\omega$.
  For the band-limited $\omega$, the two outputs coincide up to machine precision (see norms in panel (c)).}
  \label{fig:bandlimited-counterexample}
\end{figure*}

\subsection{Teacher--student spectral tests}\label{subsec:teacher-student}
We assessed the proportionality ansatz on the synthetic data constructed on a periodic square.
Let $\mathcal{B}_\Delta=\{\bm k:\,|\bm k|\!\in[k_1,k_2]\}$ be the resolved band. We generate a
real-valued band-limited field $\omega$ by assigning random phases to $\hat\omega(\bm k)$ on
$\mathcal{B}_\Delta$ and zero elsewhere, and define the “student” field $S$ through an assigned
multiplier on the band, $\hat S(\bm k)=H_{\rm true}(\bm k)\,\hat\omega(\bm k)$ with
$H_{\rm true}(\bm k)\approx \zeta$ on $\mathcal{B}_\Delta$. Optionally, we added (i) band-limited
additive complex noise to $\hat S$ or (ii) a nonlocal distortion, making $H_{\rm true}$ $|\bm k|$–dependent.

The diagnostics are computed using the shell averages over the concentric annuli in $|\bm k|$:
\begin{align}
  H(k)\;&=\;\frac{\langle \hat S(\bm k)\,\overline{\hat\omega(\bm k)}\rangle_{\text{shell}}}
                 {\langle \hat\omega(\bm k)\,\overline{\hat\omega(\bm k)}\rangle_{\text{shell}}},\nonumber \\
  \gamma^2(k)\;&=\;\frac{\big|\langle \hat S(\bm k)\,\overline{\hat\omega(\bm k)}\rangle_{\text{shell}}\big|^2}
                       {\langle|\hat S(\bm k)|^2\rangle_{\text{shell}}\,
                        \langle|\hat\omega(\bm k)|^2\rangle_{\text{shell}}}.
\end{align}
For comparison, we also report a zero-intercept least-squares fit in physical space (scatter of $S$ vs.\ $\omega$). Gain and magnitude-squared coherence are computed via standard cross-spectral estimators; see \cite{bendat2010}.

Across the three scenarios (Fig.~\ref{fig:teacher-student-3x3}), we observed:
\emph{Clean}: \; $|H(k)|\approx\zeta$ (flat on $[k_1,k_2]$), $\arg H(k)\approx 0$, $\gamma^2(k)\approx 1$, and
the real-space fit attained $R^2\approx 1$.
\emph{Noise}:\; $|H(k)|$ remained (nearly) unbiased while its variance and $1-\gamma^2(k)$ increase with decreasing SNR,
leading to a mild drop in the real-space $R^2$.
\emph{Nonlocal}: \; $|H(k)|$ developed a clear $k$–dependence and may exhibit phase offsets; despite relatively high $\gamma^2(k)$,
the single-scalar proportionality degrades in real space, and $R^2$ decreases.
These results confirm that spectral diagnostics (gain and coherence) can detect the proportionality of $\mathcal{B}_\Delta$
more robustly than real-space correlations.

\begin{figure*}[t]
  \centering
  \begin{subfigure}[b]{0.32\textwidth}
    \centering
    \includegraphics[width=\linewidth]{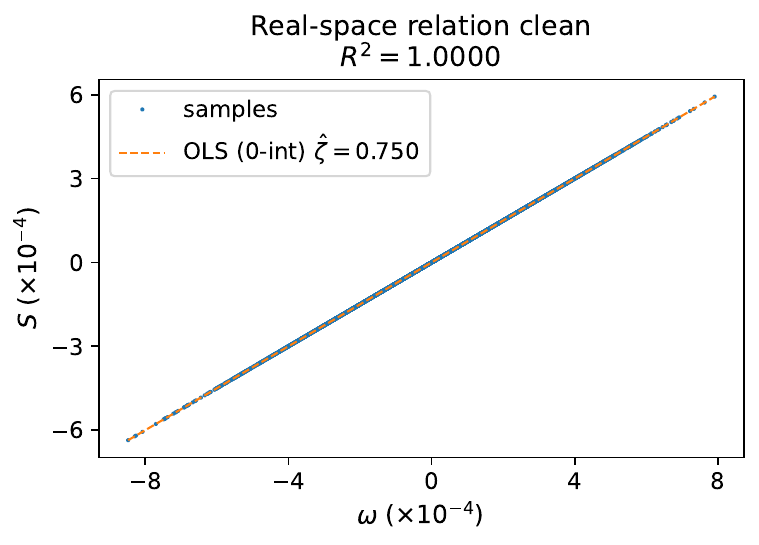}
    \subcaption{Clean}\label{fig:ts-scatter-clean}
  \end{subfigure}\hfill
  \begin{subfigure}[b]{0.32\textwidth}
    \centering
    \includegraphics[width=\linewidth]{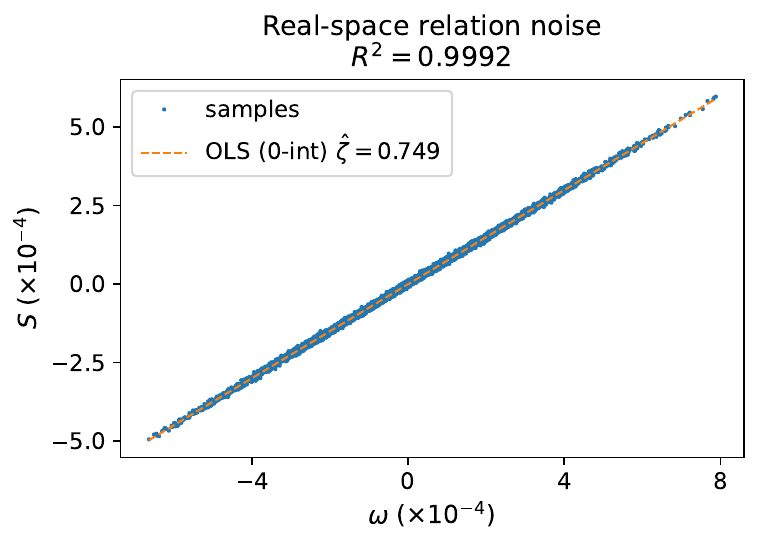}
    \subcaption{Noise}\label{fig:ts-scatter-noise}
  \end{subfigure}\hfill
  \begin{subfigure}[b]{0.32\textwidth}
    \centering
    \includegraphics[width=\linewidth]{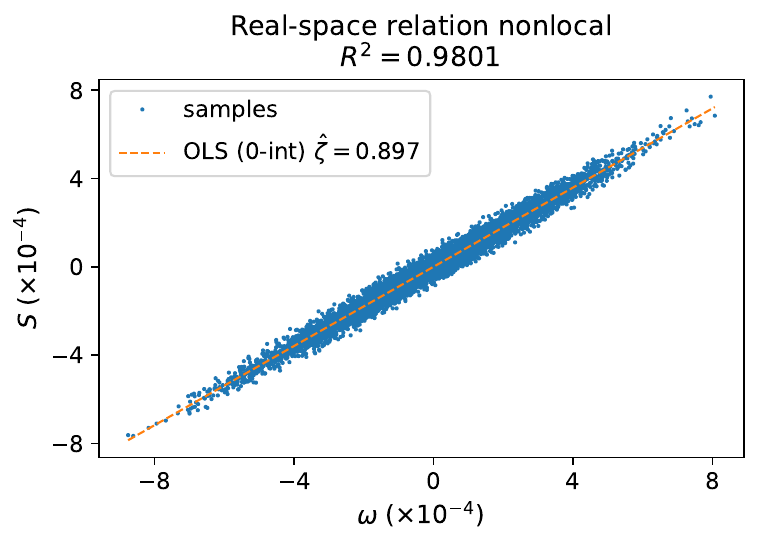}
    \subcaption{Nonlocal}\label{fig:ts-scatter-nonlocal}
  \end{subfigure}

  \vspace{0.6em}

  \begin{subfigure}[b]{0.32\textwidth}
    \centering
    \includegraphics[width=\linewidth]{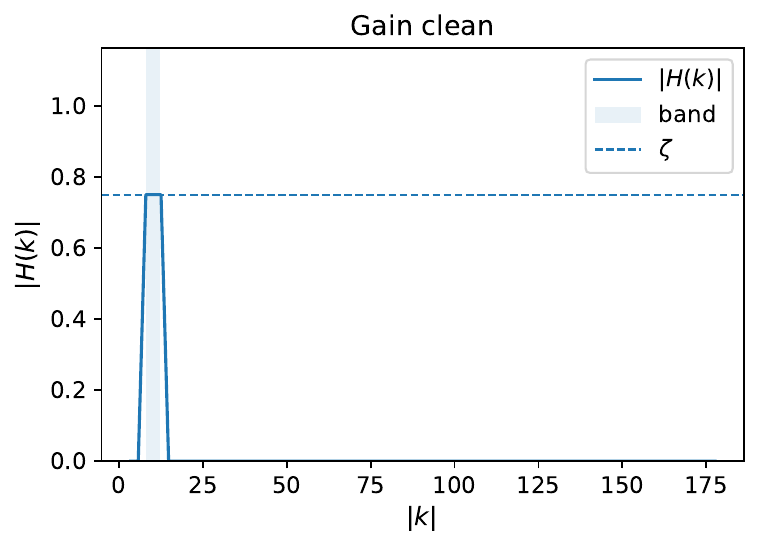}
    \subcaption{$|H(k)|$ (Clean)}\label{fig:ts-gain-clean}
  \end{subfigure}\hfill
  \begin{subfigure}[b]{0.32\textwidth}
    \centering
    \includegraphics[width=\linewidth]{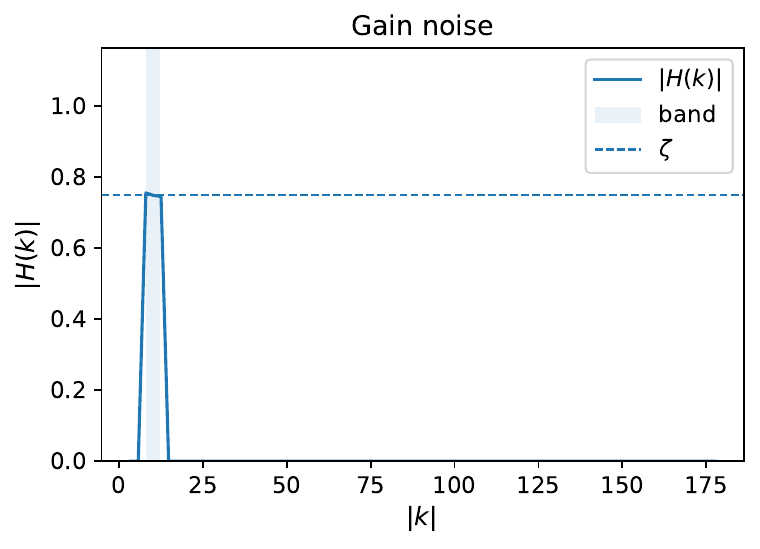}
    \subcaption{$|H(k)|$ (Noise)}\label{fig:ts-gain-noise}
  \end{subfigure}\hfill
  \begin{subfigure}[b]{0.32\textwidth}
    \centering
    \includegraphics[width=\linewidth]{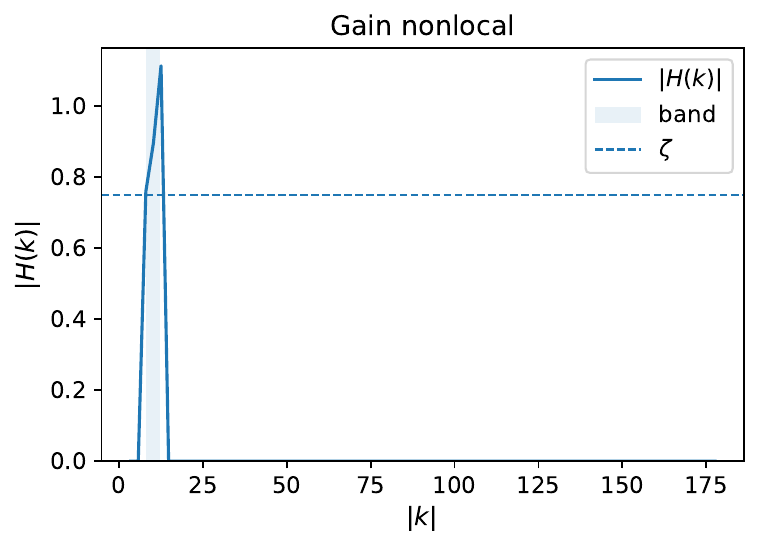}
    \subcaption{$|H(k)|$ (Nonlocal)}\label{fig:ts-gain-nonlocal}
  \end{subfigure}

  \vspace{0.6em}

  \begin{subfigure}[b]{0.32\textwidth}
    \centering
    \includegraphics[width=\linewidth]{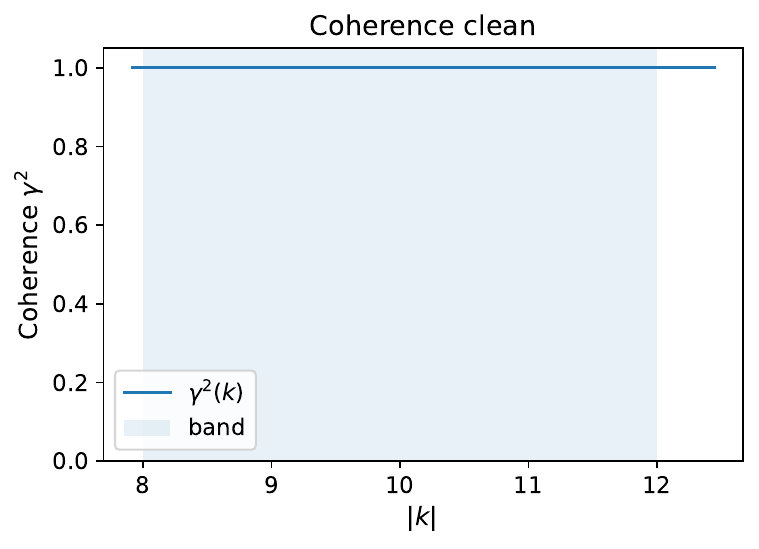}
    \subcaption{$\gamma^2(k)$ (Clean)}\label{fig:ts-coh-clean}
  \end{subfigure}\hfill
  \begin{subfigure}[b]{0.32\textwidth}
    \centering
    \includegraphics[width=\linewidth]{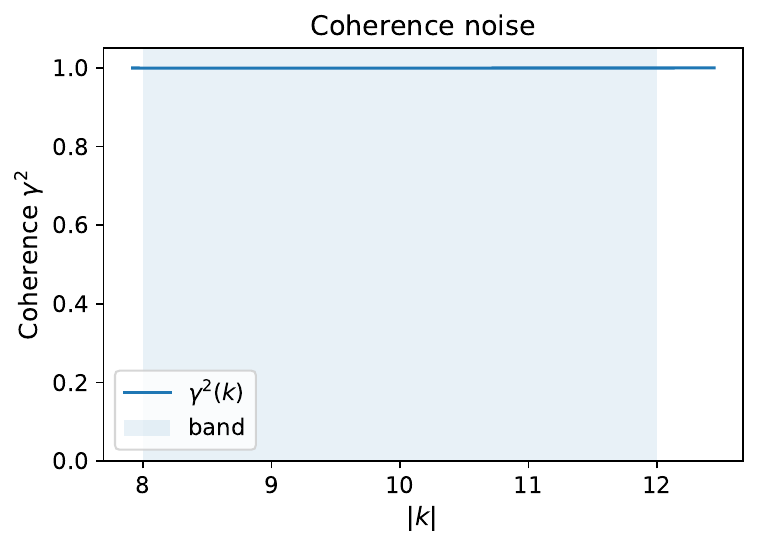}
    \subcaption{$\gamma^2(k)$ (Noise)}\label{fig:ts-coh-noise}
  \end{subfigure}\hfill
  \begin{subfigure}[b]{0.32\textwidth}
    \centering
    \includegraphics[width=\linewidth]{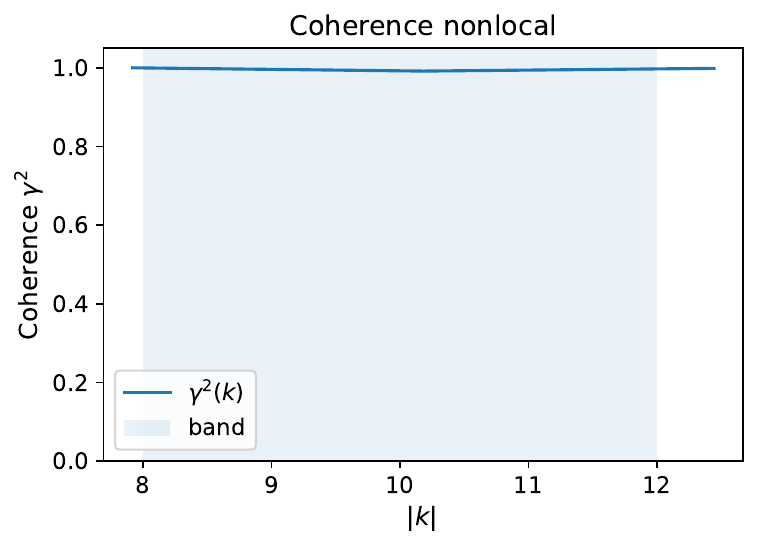}
    \subcaption{$\gamma^2(k)$ (Nonlocal)}\label{fig:ts-coh-nonlocal}
  \end{subfigure}

  \caption{Teacher--student tests on a periodic square ($n=256$, $L=2\pi$) with a band
  $k\in[k_1,k_2]=[8,12]$ and target gain $\zeta=0.75$.
  \textbf{Top row}: real-space scatter of $S$ vs.\ $\omega$ with zero-intercept OLS fit (slope $\hat\zeta$ and $R^2$ shown; axes displayed in scaled units as indicated).
  \textbf{Middle row}: shell-averaged gain magnitude $|H(k)|$; the dashed horizontal line marks $\zeta$ and the band $[k_1,k_2]$ is shaded.
  \textbf{Bottom row}: magnitude-squared coherence $\gamma^2(k)$.
  Three scenarios are shown: \emph{Clean} (ideal proportionality), \emph{Noise}
  (additive band-limited complex white noise with standard deviation $0.02$ on $S$),
  and \emph{Nonlocal} ($k$-dependent gain with {\tt nonlocal\_gain}$=0.6$).
  Estimates use shell-wise least squares,
  $H(k)=\langle \hat S\,\overline{\hat\omega}\rangle/\langle \hat\omega\,\overline{\hat\omega}\rangle$ and
  $\gamma^2(k)=\frac{|\langle \hat S\,\overline{\hat\omega}\rangle|^2}{\langle|\hat S|^2\rangle\,\langle|\hat\omega|^2\rangle}$.
  For comparability, $|H|$ and $\gamma^2$ share common $y$-axis limits across the row.}
  \label{fig:teacher-student-3x3}
\end{figure*}

\section{Discussion and conclusions}\label{sec:disc-conc}
We established a band-limited equivalence theorem for convolution operators and demonstrate how
it explains a common pitfall in the filtered dynamics: two kernels that differ outside the resolved
band are indistinguishable from any operator acting on the band-limited data. This clarifies why the
proportionality $S(\bm r)\approx\zeta\,\omega(\bm r)$ can be robustly identified using spectral
diagnostics, while appearing to be degraded in real-space correlations.

\paragraph*{Implications.}
In any setting in which data or models are spectrally truncated (inverse problems, imaging,
reduced-order modelling, and LES), the equality tests between operators must be confined to the
resolved band. In particular, for LES, our results justify the validation of closures via spectral gain
and coherence rather than (or in addition to) real-space scatter plots, which explains why
discrepancies in the physical space need not contradict the band-wise proportionality.

\paragraph*{Scope and limitations.}
Our analysis targets translation-invariant operators in periodic domains and focuses on
band-limited settings with isotropic shell averages. Numerical illustrations use synthetic
data with band-pass construction, simple $k$-dependent distortions, and additive noise.
These choices isolate the mechanism but do not exhaust all sources of model--data mismatch
(e.g.\ anisotropy, irregular bands, boundaries, sampling effects, or strongly nonstationary
or inhomogeneous flows~\HL{\cite{mason1992}}). 

\HL{In particular, Theorem~2.1 applies strictly to convolution operators on $\mathbb{R}^d$ or on periodic domains, for which the filter can be represented as a global convolution kernel and hence as a Fourier multiplier. Near solid walls, in complex geometries, or for strongly
grid-dependent numerical filters, the effective LES operator need not be translation
invariant \cite{lund1997} and cannot be reduced to a scalar transfer function on the full
domain. In such settings, band-limited equivalence in the sense of Theorem~2.1 is not
expected to hold globally, and our results should be interpreted as describing the idealized
homogeneous or periodic portions of the flow. Nonetheless, away from boundaries or in approximately homogeneous regions, many practical LES filters are either defined as homogeneous convolution kernels or are designed to approximate such convolutions, and the present framework can still serve as a useful conceptual guide for designing and interpreting band-aware diagnostics in those regions.}

\paragraph*{Robustness to approximate band-limitation and noise.}
\HL{The exact statement of Theorem~2.1 presumes perfectly band-limited inputs and exact
coincidence of the Fourier symbols on the resolved band. In practice, numerical filters
possess finite transition regions, FFT grids introduce spectral leakage, and both $S$ and
$\omega$ may be contaminated by noise. In such situations, the difference
$(T_{K_1}-T_{K_2})\omega$ can be bounded in $L^2$ norm by a contribution proportional to the
mismatch of the symbols on the resolved band and by a leakage term proportional to the
energy of $\widehat{\omega}$ outside the band; see Appendix~\ref{app:approx} for a quantitative estimate. This shows that band-wise equivalence remains stable as long as the spectral mismatch
$\|\widehat{K}_1 - \widehat{K}_2\|_{L^\infty(\mathcal{B}_\Delta)}$ is small on $\mathcal{B}_\Delta$ and the
out-of-band content $\|\widehat{\omega}\|_{L^2(\mathcal{B}_\Delta^c)}$ is weak.

Our teacher--student tests in Fig.~\ref{fig:teacher-student-3x3} illustrate this robustness numerically. In the
presence of band-limited additive noise, the estimated gain $H(k)$ remains nearly unbiased
on the band while the magnitude-squared coherence $\gamma^2(k)$ decays as the signal-to-noise
ratio decreases, and real-space scatter plots exhibit a more pronounced degradation. When a
nonlocal, $k$-dependent distortion is introduced, $|H(k)|$ clearly departs from a constant
on the band even though $\gamma^2(k)$ can remain relatively high. These observations are
consistent with the approximate band-limited equivalence and highlight that spectral gain
and coherence provide a more robust assessment of proportionality on $\mathcal{B}_\Delta$ than
real-space correlations in the presence of noise or imperfect filtering.}

\paragraph*{Outlook.}
The framework naturally extends to (i) other closure forms (e.g., eddy viscosity and higher-order
Fourier multipliers on the band), (ii) anisotropic or geometry-induced filters, (iii) finite-sample
effects and uncertainty quantification for $H(k)$ and coherence, and (iv) validation on high-resolution
LES/DNS or experimental datasets. We expect that ``band-aware'' diagnostics will remain decisive even
under moderate noise or nonlocal distortions, as corroborated by our teacher--student tests.

\appendix
\section{Derivation of $S(\bm r)\approx \zeta\,\omega(\bm r)$ under band-limited symmetry assumptions}\label{app:closure-derivation}

We modeled the subgrid contribution $S=T\omega$ as a linear operator $T$ acting on the resolved vorticity.
Assume:
\begin{itemize}
\item[(A1)] (\emph{Band limitation}) $\hat\omega(\bm k)=0$ for $|\bm k|>k_c$; $T$ maps $\mathcal V_\Delta$ to itself.
\item[(A2)] (\emph{Translation invariance}) $T$ is a convolution: $\widehat{T\omega}(\bm k)=\hat H(\bm k)\,\hat\omega(\bm k)$.
\item[(A3)] (\emph{Isotropy on the band}) $\hat H(\bm k)=h(|\bm k|)$ for $|\bm k|\le k_c$.
\item[(A4)] (\emph{Scale separation / narrow band}) On the energetic support $|\bm k|\in[k_1,k_2]\subset(0,k_c]$,
$h(|\bm k|)$ is approximately constant: $h(|\bm k|)=\zeta+o(1)$.
\end{itemize}
Then, for all resolved $\omega$,
\[
\widehat{S}(\bm k)=\hat H(\bm k)\,\hat\omega(\bm k)\approx \zeta\,\hat\omega(\bm k)\quad (|\bm k|\le k_c),
\]
hence $S(\bm r)\approx \zeta\,\omega(\bm r)$ in $\mathcal V_\Delta$.
\medskip

\begin{remark}
(A2) reduces $T$ to a Fourier multiplier; (A3) forces the symbol to depend only on $|\bm k|$; (A4) yields band-wise constancy.
Therefore, the only admissible isotropic, translation-invariant zeroth-order closure on a narrow resolved band is a scalar multiple of the identity.
\end{remark}

\subsection{Heuristic estimate via eddy viscosity on a narrow band}
Starting from the filtered vorticity transport, the nonlinear term produces
$S\approx -\nu_t \nabla^2 \omega$ for several LES closures. For a band-limited $\omega$ with dominant wavenumbers
$|\bm k|\in[k_1,k_2]$, we have in the Fourier space
$\widehat{S}(\bm k)\approx \nu_t |\bm k|^2 \hat\omega(\bm k)$.
If the band is narrow, a representative $k_\star\in[k_1,k_2]$ is used to obtain
\[
S\approx \nu_t k_\star^2\,\omega \quad\Longrightarrow\quad \zeta \approx \nu_t k_\star^2,
\]
with a relative error $O\!\left((k_2-k_1)/k_\star\right)$. This provides a concrete scaling for $\zeta$ when the eddy viscosity picture is appropriate.

\subsection{Probabilistic linear-response estimate ($\zeta \approx 1/\tau$)}
The complementary derivation exhibited a linear response (quasi–Markovian) viewpoint. 
Assuming that the coarse-grained vorticity obeys after filtering and projection onto the resolved subspace,
a Langevin-type closure at leading order,
\begin{equation}
    \partial_t \omega_\Delta(\bm r,t) \;\approx\; -\,\zeta\,\omega_\Delta(\bm r,t) \;+\; \xi(\bm r,t),
    \label{eq:LRP_ODE}
\end{equation}
where $\xi$ collects the fast (unresolved) fluctuations. Under stationarity and weak-coupling assumptions 
(standard in projection-operator/linear-response closures), the proportionality $\hat S(\bm k)\approx \zeta\,\hat\omega(\bm k)$
on the resolved band implies that the decay rate of the (temporal) autocorrelation determines the gain:
\begin{align}
    C_{\omega\omega}(t) \;=\; \langle \omega_\Delta(\cdot,t)\,\omega_\Delta(\cdot,0)\rangle
    \;\sim\; C_{\omega\omega}(0)\,e^{-t/\tau}, \nonumber \\
    \qquad \Longrightarrow \qquad \zeta \;\approx\; \tau^{-1},
    \label{eq:tau_time}
\end{align}
with the decorrelation time defined by the Green–Kubo–type estimator
\begin{equation}
    \tau \;=\; \frac{1}{C_{\omega\omega}(0)}\int_{0}^{\infty} C_{\omega\omega}(t)\,dt.
    \label{eq:tau_GK}
\end{equation}
In the spectral form, one may define a band-restricted (or shell-averaged) time scale
\begin{equation}
    \tau(k) \;=\; \frac{1}{C_{\omega\omega}(k,0)}\int_{0}^{\infty} C_{\omega\omega}(k,t)\,dt,
    ~~\zeta(k)\;\approx\;\tau(k)^{-1},
    \label{eq:tau_k}
\end{equation}
and for a narrow energetic band $k\in[k_1,k_2]$ use $\zeta \approx \tau(k_\star)^{-1}$ with a representative $k_\star$.
This links the probabilistic estimate to the eddy-viscosity estimate: if $S\approx \nu_t \nabla^2\omega_\Delta$ on the band, then 
$\zeta\approx \nu_t k_\star^2 \approx \tau(k_\star)^{-1}$, which yields $\nu_t \approx [k_\star^2 \tau(k_\star)]^{-1}$.
\emph{Assumptions.} Relations \eqref{eq:tau_time}–\eqref{eq:tau_k} rely on stationarity and an effective Markovian reduction 
(Mori–Zwanzig memory kernel approximated by a delta kernel~\cite{zwanzig2001}), which is consistent with the replacement of the memory integral with a constant gain $\zeta$ over the resolved band.

\section{Derivation of the recursive curl relation}\label{appendix:derivrecurcur}
\paragraph*{Conventions and 2D embedding.}
We work in 2D but embed fields in $\mathbb{R}^3$ by considering the vorticity as
$\bm\omega(\bm r)=\omega(\bm r)\,\bm e_z$.
For $\phi:\mathbb{R}^d\to\mathbb{C}$, the curl and Fourier transform satisfy
\[
\widehat{\nabla\times(\phi\,\bm e_z)}(\bm k)
= i\,\bm k\times\big(\widehat{\phi}\,\bm e_z\big)
= \big(i\,\bm k\times\bm e_z\big)\,\widehat{\phi}(\bm k).
\]
We use $L^2$-sense for Fourier transforms and $F\in L^1$ such that the derivatives commute with filtering.
\paragraph*{From self-convolution to curl in Fourier space.}
Starting from the self-convolution closure,
\begin{align}
\overline{\omega}(\bm r)=\int_{\mathbb{R}^d}\zeta\,G(\bm r-\bm r')\,\omega(\bm r')\,\mathrm d\bm r', \nonumber \\
\qquad\Longleftrightarrow\qquad
\widehat{\overline{\omega}}(\bm k)=\zeta\,\widehat G(\bm k)\,\hat\omega(\bm k), \nonumber
\end{align}
we take the curl (of the embedded vector $\overline{\omega}\,\bm e_z$) and transform:
\[
i\,\bm k\times\widehat{\overline{\omega}}(\bm k)\,\bm e_z
= \zeta\,i\,\bm k\times\big(\widehat G(\bm k)\,\hat\omega(\bm k)\,\bm e_z\big).
\]
Equivalently (dropping the explicit $\bm e_z$ symbolically),
\begin{equation}
i\,\bm k\times\widehat{\overline{\omega}}(\bm k)
= \zeta\,\big[i\,\bm k\times\widehat G(\bm k)\big]\,\hat\omega(\bm k).
\label{eq:A1}
\end{equation}
\paragraph*{Two–vector basis at fixed $\bm k$.}
For each fixed $\bm k\neq\bm 0$, two vectors
\[
\bm e_z,\qquad \bm u(\bm k):=i\,\bm k\times\bm e_z
\]
span the subspace relevant to the left-hand side of \eqref{eq:A1}.
Hence, any vector of the form $\big[i\,\bm k\times\widehat G(\bm k)\big]\,\hat\omega(\bm k)$
can be uniquely decomposed as
\begin{equation}
\big[i\,\bm k\times\widehat G(\bm k)\big]\,\hat\omega(\bm k)
= \widehat{\mathcal H}(\bm k)\,\hat\omega(\bm k)\,\bm e_z
  + \widehat{\mathcal F}(\bm k)\,\bm u(\bm k)\,\hat\omega(\bm k),
\label{eq:A2}
\end{equation}
for scalar multipliers $\widehat{\mathcal H},\widehat{\mathcal F}$ depending on $\bm k$.
Projecting onto this basis yields the following explicit formulae:
\begin{align}
\widehat{\mathcal H}(\bm k)
&= \bm e_z\cdot\Big(i\,\bm k\times\widehat G(\bm k)\,\bm e_z\Big),
\nonumber \\
\widehat{\mathcal F}(\bm k)
&= \frac{\bm u(\bm k)\cdot\Big(i\,\bm k\times\widehat G(\bm k)\,\bm e_z\Big)}
         {\|\bm u(\bm k)\|^2}
= \frac{\bm u(\bm k)\cdot\Big(i\,\bm k\times\widehat G(\bm k)\,\bm e_z\Big)}
         {|\bm k|^2}.\nonumber
\end{align}
Here, we have used $\|\bm u(\bm k)\|^2=|\bm k|^2$.
\paragraph*{Recursive curl relation.}
By substituting \eqref{eq:A2} into \eqref{eq:A1} and using
$\bm u(\bm k)\,\hat\omega(\bm k) = i\,\bm k\times\hat\omega(\bm k)$, we obtain
\[
i\,\bm k\times\widehat{\overline{\omega}}(\bm k)
= \widehat{\mathcal H}(\bm k)\,\hat{\omega}(\bm k)
  + \widehat{\mathcal F}(\bm k)\,\big[i\,\bm k\times\hat{\omega}(\bm k)\big],
\]
which is the recursive relation \eqref{eq:reccursiverelation} in the main text.
Here, $\widehat{\mathcal H}$ represents the ``direct term,'' while $\widehat{\mathcal F}$ represents the
``feedback term'' acting on $i\,\bm k\times\hat\omega$.
\paragraph*{Special case: scalar $G$.}
If $G$ is a scalar kernel (i.e., $\widehat G(\bm k)$ is scalar-valued), then
$i\,\bm k\times\widehat G(\bm k)\,\bm e_z = \widehat G(\bm k)\,\bm u(\bm k)$, and hence
\[
\widehat{\mathcal H}(\bm k) = 0,
\qquad
\widehat{\mathcal F}(\bm k) = \zeta\,\widehat G(\bm k).
\]
Even in this reduced form, the second term on the right-hand side still contains
$i\,\bm k\times\hat\omega$ recursively, making small-to large-scale coupling explicit.
even for the linear operators.

\section{Approximate band-limited equivalence}\label{app:approx}
\HL{The exact theorem (Theorem~2.1) assumes perfectly band-limited inputs and exact equality
of the symbols on the resolved band. In practice, numerical fields are only approximately
band limited and filters possess finite transition regions. The following lemma provides a
simple $L^2$ error estimate in this setting.}

\HL{\begin{lemma}[Approximate band-limited equivalence]
Let $K_1, K_2 \in L^1(\mathbb{R}^d)$ and let $\mathcal{B}_\Delta \subset \mathbb{R}^d$ be a measurable
set of finite measure. For any $\omega \in L^2(\mathbb{R}^d)$ with Fourier transform
$\widehat{\omega}$, we have
\begin{align}
\label{eq:approx_equiv}
\|(T_{K_1} - T_{K_2})\omega\|_{L^2}
\;\le\; 
\|\widehat{K}_1 - \widehat{K}_2\|_{L^\infty(\mathcal{B}_\Delta)}\,
\|\widehat{\omega}\|_{L^2(\mathcal{B}_\Delta)} \nonumber \\
\;+\;
\bigl(\|K_1\|_{L^1} + \|K_2\|_{L^1}\bigr)\,
\|\widehat{\omega}\|_{L^2(\mathcal{B}_\Delta^c)}.
\end{align}
In particular, if $\widehat{K}_1$ and $\widehat{K}_2$ are close on $\mathcal{B}_\Delta$ and
$\|\widehat{\omega}\|_{L^2(\mathcal{B}_\Delta^c)}$ is small (i.e.\ $\omega$ is approximately band
limited), then $T_{K_1}$ and $T_{K_2}$ act approximately equivalently on $\omega$ in $L^2$.
\end{lemma}

\begin{proof}
Write $\widehat{\omega} = \widehat{\omega}_\Delta + \widehat{\omega}_{\Delta^c}$ with
$\widehat{\omega}_\Delta := \widehat{\omega}\,\mathbf{1}_{\mathcal{B}_\Delta}$ and
$\widehat{\omega}_{\Delta^c} := \widehat{\omega}\,\mathbf{1}_{\mathcal{B}_\Delta^c}$.
By linearity,
\[
(T_{K_1} - T_{K_2})\omega
=
(T_{K_1} - T_{K_2})\omega_\Delta
+
(T_{K_1} - T_{K_2})\omega_{\Delta^c},
\]
where $\omega_\Delta$ and $\omega_{\Delta^c}$ are the inverse Fourier transforms of
$\widehat{\omega}_\Delta$ and $\widehat{\omega}_{\Delta^c}$, respectively.

Using the Plancherel theorem and the fact that
$\widehat{K}_j \in L^\infty(\mathbb{R}^d)$ for $K_j \in L^1(\mathbb{R}^d)$, we obtain
\begin{align}
\|(T_{K_1} - T_{K_2})\omega_\Delta\|_{L^2}
=
\|(\widehat{K}_1 - \widehat{K}_2)\,\widehat{\omega}_\Delta\|_{L^2} \nonumber \\
\le
\|\widehat{K}_1 - \widehat{K}_2\|_{L^\infty(\mathcal{B}_\Delta)}\,
\|\widehat{\omega}\|_{L^2(\mathcal{B}_\Delta)}. \nonumber
\end{align}

For the second term, Young's convolution inequality yields
\[
\|T_{K_j}\omega_{\Delta^c}\|_{L^2}
\le \|K_j\|_{L^1}\,\|\omega_{\Delta^c}\|_{L^2},
\qquad j = 1,2.
\]
Therefore,
\[
\|(T_{K_1} - T_{K_2})\omega_{\Delta^c}\|_{L^2}
\le
\bigl(\|K_1\|_{L^1} + \|K_2\|_{L^1}\bigr)\,\|\omega_{\Delta^c}\|_{L^2}.
\]
Plancherel's theorem gives
$\|\omega_{\Delta^c}\|_{L^2} = \|\widehat{\omega}\|_{L^2(\mathcal{B}_\Delta^c)}$.
Combining the bounds for $\omega_\Delta$ and $\omega_{\Delta^c}$ yields
\eqref{eq:approx_equiv}.
\end{proof}

\begin{remark}
Estimate~\eqref{eq:approx_equiv} shows that the exact band-limited equivalence is stable
under small perturbations of the symbols on $\mathcal{B}_\Delta$ and under small spectral leakage
outside $\mathcal{B}_\Delta$. In particular, if $\widehat{K}_1 = \widehat{K}_2$ on $\mathcal{B}_\Delta$ and
$\widehat{\omega}$ is supported in an enlarged band containing $\mathcal{B}_\Delta$ with rapidly
decaying energy outside $\mathcal{B}_\Delta$, then $(T_{K_1}-T_{K_2})\omega$ is controlled entirely
by the out-of-band energy. This situation is typical for numerical filters with smooth transition
regions and for band-pass constructions used in large-eddy simulation and teacher--student
tests.
\end{remark}
}


\section*{Acknowledgements}
This study was supported by JSPS KAKENHI Grant Number 22K14177 and JST PRESTO, Grant Number JPMJPR23O7.
The author acknowledges the stimulating discussions with the members of the PRESTO program, which helped increase awareness of the present problem and its significance. 
The author acknowledges the use of ChatGPT (developed by OpenAI) and DeepL for initial language writing assistance during the manuscript preparation. 
All AI-assisted content was reviewed and revised by the author. 
The author would like to thank Editage (www.editage.jp) for English language editing of the final version of the manuscript. 
The author would like to express sincere gratitude to his family members for their unwavering moral support and encouragement.

\section*{Data availability statement}
My manuscript has no associated data.

\section*{Declaration of interests}
The authors report no conflict of interest.

\bibliographystyle{aipnum4-1}
\bibliography{references}
\end{document}
%